\documentclass[]{amsart}
\usepackage{amsmath}
\usepackage{amsfonts}
\usepackage{amssymb}
\usepackage{amsthm}
\usepackage{enumerate}
\usepackage{graphicx,color,xcolor,tikz}
\usepackage{bbold}
\usepackage[utf8]{inputenc}
\usepackage{hyperref}
\hypersetup{
    colorlinks=true,                         
    linkcolor=blue, 
    citecolor=red, 
  } 


\newtheorem{Prop}{Proposition}

\newtheorem{Rem}{Remark}


\newcommand{\p}{\partial}

\renewcommand{\div}{\text{div}}

\title{Derivation of a two-phase flow model accounting for
  surface tension}
\date{\today}

\author{H. Mathis}
\address{Institut Montpelliérain Alexander Grothendieck, Université de
  Montpellier, CNRS, Montpellier, France}
\email{helene.mathis@umontpellier.fr}

\begin{document}
\begin{abstract}
  This paper addresses the derivation of a two-phase flow model
  accounting for surface tension effects, by means of the Stationary
  Action Principle (SAP). 
  The Lagrangian functional, defining the Action,  is
  composed of a kinetic energy, accounting for interface feature, and
  a potential energy.
 The key element of the model lies on the assumption that the
  interface separating the two phases admits its own internal energy,
  satisfying a Gibbs form including  both surface
  tension and interfacial area. Thus surface tension is taken into
  account both in the potential energy and the kinetic one which
  define the Lagrangian functional.
  Applying the SAP allows to build a set of partial
  differential equations modelling the dynamics of the two-phase flow.
  It includes evolution equations of the volume fraction and the
  interfacial area, accounting for mechanical relaxation terms.
  The final model is shown to be well posed (hyperbolicity, Lax
  entropy).
\end{abstract}

\maketitle

\noindent
\textbf{Key-words.} Two-phase compressible flows, 
interfacial area, thermodynamics, Stationary Action Principle,
hyperbolicity\\

\noindent
\textbf{2020 MCS.} 76T05, 76N, 76A02, 80A10 

\tableofcontents

\section{Introduction}
\label{sec:introduction}

The modelling of compressible multiphase flows have been the topic of
a large literature over the past decades, notably for practical
applications such as nuclear safety of pressurized water reactor. In
the context of the loss of coolant accident for instance,
the liquid water refrigerant is submitted to high pressure and
temperature condition, so that a break in the refrigerant circuit
could lead to the appearance of vapor and induce shock and phase
transition waves \cite{bartak90}.
Hence the question is not
only to capture the wave structure but also to get informations on the
different exchanges occuring at the liquid-vapor interface.

These transfers depend strongly on the area of the
interfaces, even when focusing on the large scale description.
Several approaches have been proposed to establish the evolution of
the interfacial area, depending mostly of the scale of description.
When focusing on polydisperse
flows, with many inclusions, bubbles or droplets, the modelling of the
bubbles pulsation requires to keep a small scale description.
For instance in \cite{LHUILLIER2004}, the author proposes a transport
equation based on heuristics of particulate suspensions, assuming that
both phases evolve with distinct velocities. Then focusing
on the small scale, he proposes a second transport equation while
studying the fluctuations of a small interface element. Following this
approach several models have been proposed in a serie of works
\cite{Kokh19, CordesseEtAl20, cordesse20, dibattista:tel}, considering the
one-velocity framework.
The set of bubbles/droplets is described by a probability density function,
which satisfies the so-called Williams-Boltzmann equation. The
distribution function describes then the probability of presence of a
bubble at a certain time and position, which evolves with a given
velocity. It also takes into account topological properties of the
bubble/droplet such as its volume or radius. In a recent
contribution \cite{loison2023}, the authors propose to make the density distribution
depend on topological informations of the interface as well,
considering in particular the level set of the interface and its local
mean curvature. Doing so they manage to get informations on the
interfacial area and provide a full partial differential equations (PDEs) model for the mixture
dynamics, by means of Stationary Action Principle (SAP) applied to a given
Lagrangian functional. Note that the model they get enters the class
of averaged models, in the sense that it does not provide any
information on the topology of the interface, which is solely
indicated by evolution equations of 1) the void fraction of one of the two phases
and 2) the interfacial area.

In the application we have in mind, a precise description of the
interface topology is not mandatory. The high heterogeneity of the
flows, the strong temperature and pressure conditions suggest to
consider averaged models where the interface is depicted implicitly.
However the interface between the
two phases is the locus of all the thermodynamical exchanges, and the
relaxation towards the thermodynamical depends strongly on this area,
especially the relaxation time scales.
This is the approach adopted in \cite{Boukili19} where a convection
equation of the interfacial area is coupled to a barotropic 
three-phase flow model of
Baer-Nunziato type. The equation is endowed with a source term
which cancels as soon as the Weber number (ratio of the momentum over
the surface tension) is greater than a given threshold or when the
relative velocity of the two phases is null.
The interfacial area equation is inspired by the modelling proposed in
\cite{MEIGNEN2014528} for steam explosion simulations.
\medskip

In the latter references the surface tension effect solely
depends on geometrical features which define the kinetic energy of the
bulk and the interface. In all the proposed derivations, the
thermodynamic behaviour of the interface is not considered, and in
particular, the surface tension does not affect the potential energy
of the system. This is precisely this point of vue that we are developing here. 

The derivation of the averaged fluid-interface model is
obtained adapting the Stationary Action Principle as detailled in \cite{BGP23}.
The core of our model relies on the rigorous derivation of the
potential energy involved in the Lagrangian
functional, while the kinetic energy accounts for smale scales, in the
spirit of \cite{Kokh19}.

The originality of our approach is to consider the surface tension
not only as a dynamical feature but also a thermodynamics one.
To do so we come back to classic extensive thermodynamics in the
sense of Gibbs \cite{callen85,Kondepudi} to described as rigorously as possible the
thermodynamical behaviour of the fluid-interface system.
In the case of two
phase flows, this methodology has been used  for instance
in \cite{HS06, HM10, faccanoni12, FM19} and in the quoted references,
leading to thermodynamically consistent multiphase flow models (for
immiscible and miscible mixtures).
The novelty here relies in the fact that the interface is assumed to
be described by an extensive internal energy function, assuming that
the interface has no mass, occupies no volume but is characterized by
its temperature and its (interfacial) area. Thus the
interface is entirely described by its internal energy which satisfies a
Gibbs form involving not only temperature/entropy but also
surface tension/interfacial area variations. Note that this
characterization goes back to \cite{Landau, Kondepudi} and more recently in
\cite{smai2020} to model (multiphase) flows in  porous media.
\medskip

The paper is organized as follows. The Section \ref{sec:therm-modell} is devoted to the
thermodynamical modelling focusing on the extensive and intensive
descriptions of the two phases and the interface. The 
internal energy of the fluid-system is defined  and the associated mixture temperature,
pressure and chemical potential as well. Besides the initial setting ensures that the
interfacial area and
the surface tension appear naturally in the pressure.
When focusing on the characterization of the thermodynamical
equilibrium, some interessant properties arise, especially the fact
that the mechanical equilibrium is depicted by a differential form
involving the volume fraction and the interfacial area.

In Section \ref{sec:deriv-evol-equat} we make use of the
fluid-interface internal energy to define
the potential energy of the Lagrangian functional defining the Action.
A review of the so-called small scale kinetic energies available in the
literature is presented and motivates the choice of our kinetic energy.
Then the SAP leads to the obtention
of a set of PDEs describing the dynamics of the fluid-interface system.
Since the SAP qualifies reversible processes, it guarantees
the conservations of the momentum and the total energy and the
hyperbolicity and symmetrization property of the model. Such properties
are given in Section
\ref{sec:analyse}.

\section{Thermodynamical modelling}
\label{sec:therm-modell}
We consider the fluid-interface system with volume $V$, mass $M$ and entropy
$S$. It is composed of the two immiscible fluids or phases $k=1,2$ with
indices $k=1,2$,
separated by an interface, with index $i$. At each point of this system, local
equilibrium is reached so that each part of the system is depicted by
its own Equation of State (EoS).
In the present section are listed the notations and assumptions for
the fluid
phases and the interface, both in extensive and intensive variables.
Then, the second law of thermodynamics allows us to characterize the
fluid-interface internal energy and the
thermodynamical equilibrium of the system. 

\subsection{Fluid phases}
\label{sec:single-phase}
A phase $k=1,2$ of volume $V_k\geq 0$, entropy $S_k\geq 0$ and mass
$M_k\geq 0$ is entirely described by its extensive internal energy function
$E_k$ which complies with the following assumptions:
\begin{itemize}
\item $(M_k,V_k,S_k)\mapsto E_k(M_k,V_k,S_k)$ is $\mathcal C^2$ on
  $(\mathbb R_+)^3$,
\item  $(M_k,V_k,S_k)\mapsto E_k(M_k,V_k,S_k)$ is convex,
\item $\forall \lambda\in \mathbb R_+^*, \forall (M_k,V_k,S_k)\in
  (\mathbb R_+)^3, E_k(\lambda M_k,\lambda V_k,\lambda S_k)=\lambda
  E_k(M_k,V_k,S_k)$.
\end{itemize}
The last assumption corresponds to the extensive character of the
internal energy function : when doubling the volume, mass and entropy of the system, the extensive
internal energy is doubled as well. This homogeneity property implies
that the extensive internal energy $E_k$ is convex but not
strictly convex.

Some intensive parameters are defined as partial derivatives of $E_k$:
\begin{itemize}
\item the pressure $p_k(M_k,V_k,S_k)=-\p E_k/\p V_k(M_k,V_k,S_k)$,
\item the temperature  $T_k(M_k,V_k,S_k)=\p E_k/\p
  S_k(M_k,V_k,S_k)>0$,
\item the chemical potential $\mu_k(M_k,V_k,S_k)=\p E_k/\p M_k(M_k,V_k,S_k)$,
\end{itemize}
leading to the total differential form
\begin{equation}
  \label{eq:Gibbs_ext}
  \mathrm{d}E_k = T_k\mathrm{d}S_k -p_k\mathrm{d}V_k+\mu_k\mathrm{d}M_k,
\end{equation}
referred as extensive (phasic) Gibbs form in the sequel.
Since the internal energy is extensive, its satisfies the Euler
relation
\begin{equation}
  \label{eq:euler}
  E_k = T_kS_k -p_kV_k+\mu_kM_k.
\end{equation}
Some intensive variables and potentials can be defined while
considering the extensive ones relatively to the mass of the phase $k$.
We introduce the specific volume $\tau_k = V_k/M_k$ and the specific
entropy $s_k= S_k/M_k$ of the phase $k=1,2$. Then the specific
internal energy $e_k(\tau_k,s_k)$ corresponds to a restriction of the
extensive energy:
\begin{equation}
  \label{eq:int_energy}
  e_k(\tau_k,s_k) = E_k(1,\tau_k,s_k).
\end{equation}
The phasic pressure and temperature can be defined as functions
of the intensive variables as well (while keeping the same notations):
\begin{equation}
  \label{eq:int_T_p}
  p_k(\tau_k,s_k)=-\p e_k/\p \tau_k(\tau_k,s_k), \quad
  T_k(\tau_k,s_k)=\p e_k/\p s_k(\tau_k,s_k).
\end{equation}
The intensive potentials comply thus with an intensive differential
(phasic) Gibbs form:
\begin{equation}
  \label{eq:int_gibbs}
  \mathrm{d}e_k = T_k\mathrm{d}s_k-p_k\mathrm{d}\tau_k.
\end{equation}
Note that scaling the extensive Euler relation \eqref{eq:euler} with respect to the
mass $M_k$
gives another definition of the chemical potential $\mu_k$, which turns to be
the Legendre transform of the internal energy $e_k$:
\begin{equation}
  \label{eq:muk}
  \mu_k = e_k-T_ks_k+p_k\tau_k.
\end{equation}

\subsection{The interface}
\label{sec:interface}
The interface separating the two phases is supposed to be sharp and to
have no volume and no mass. Adopting an extensive description, it is
thus characterized by its energy $E_\mathrm{i}$, function of its entropy $S_\mathrm{i}$
and its area $A_\mathrm{i}$. According to the first principle of thermodynamics,
it holds
\begin{equation}
  \label{eq:int_gibbs_i}
  \mathrm{d}E_\mathrm{i} = T_\mathrm{i}\mathrm{d}S_\mathrm{i}+\gamma_\mathrm{i}\mathrm{d}A_\mathrm{i},
\end{equation}
where $\gamma_\mathrm{i} (S_\mathrm{i} A_\mathrm{i})$ is the surface tension and $T_\mathrm{i}(S_\mathrm{i},A_\mathrm{i})$
the interfacial temperature.
The internal energy $E_\mathrm{i}$ being an extensive quantity, its complies
with the Euler relation 
\begin{equation}
  \label{eq:int_euler_i}
  E_\mathrm{i} = T_\mathrm{i}S_\mathrm{i}+\gamma_\mathrm{i}A_\mathrm{i},
\end{equation}
which yields, after differentiating and subtracting
\eqref{eq:int_gibbs_i}, the so-called Gibbs--Duhem relation
\begin{equation}
  \label{eq:int_gibbs_duhem}
  0 = S_\mathrm{i}\mathrm{d}T_\mathrm{i} + A_\mathrm{i}\mathrm{d}\gamma_\mathrm{i}.
\end{equation}
Since the interface has no mass, a way to deduce intensive potentials
is to scale with respect to the volume $V$ of the fluid-interface
system.
This way we introduce the interfacial
density area
\begin{equation}
  \label{eq:a}
  a_\mathrm{i}=A_\mathrm{i}/V,
\end{equation}
while scaling the area $A_\mathrm{i}$ by the volume $V$ of the mixture.

Now, scaling the extensive variables with respect to the interface
area $A_\mathrm{i}$ defines the
interfacial intensive entropy $s_\mathrm{i}=S_\mathrm{i}/A_\mathrm{i}$ and the
interfacial intensive energy $e_\mathrm{i}=E_\mathrm{i}/A_\mathrm{i}$.
Scaling the Euler relation \eqref{eq:int_euler_i} with respect to the
volume $V$ of the mixture, one deduces 
\begin{equation}
  \label{eq:ei}
   e_\mathrm{i} = T_\mathrm{i} s_\mathrm{i} + \gamma_\mathrm{i},
\end{equation}
and  doing so with the interfacial
Gibbs relation \eqref{eq:int_gibbs_i} gives
\begin{equation}
  \label{eq:Ti}
   e_\mathrm{i}'(s_\mathrm{i}) = T_\mathrm{i},
\end{equation}
and
\begin{equation}
  \label{eq:daeI}
 \mathrm{d}(a_\mathrm{i}e_\mathrm{i}) = T_\mathrm{i} \mathrm{d}(a_\mathrm{i}s_\mathrm{i})+\gamma_\mathrm{i}\mathrm{d}a_\mathrm{i}.
\end{equation}
By the definition \eqref{eq:a} of the interfacial density area $a_\mathrm{i}$,
observe that the relation \eqref{eq:int_gibbs_duhem}
gives
\begin{equation}
  \label{eq:int_gam}
  \gamma_\mathrm{i}'(T_\mathrm{i}) = -s_\mathrm{i}(T_\mathrm{i}).
\end{equation}
This derivative relation is not often mentioned in the literature, but
it can be found in \cite{Landau, Kondepudi} for instance.
\subsection{Thermodynamical equilibrium}
\label{sec:therm-equil}

We now consider the fluid system with volume $V$, mass $M$ and entropy
$S$. The two immiscible phases 
$k=1,2$
are separated by the interface of area $A_\mathrm{i}$.
Accounting for the constitutive laws of the two fluid phases and the
interface, we now turn to the characterization of thermodynamical
equilibrium of the whole system. For a given state $(M,V,S,A_\mathrm{i})$ of the
system, different modelling constraints have to be set.
As mentioned before, the two phases are supposed to be immiscible and
that no vacuum appears,
such that the total volume is the sum of the phasic volumes
\begin{equation}
  \label{eq:imm_cons}
  V=V_1+V_2,
\end{equation}
since the interface has no volume.
As the mass conservation of the system is concerned, it holds
\begin{equation}
  \label{eq:mass_cons}
  M=M_1+M_2,
\end{equation}
since the interface has no mass and only mass transfer can
occur between the two phases (and not with the interface).
Finally the homogeneity property of the system entropy states that
\begin{equation}
  \label{eq:ent_cons}
  S=S_1+S_2+S_\mathrm{i}.
\end{equation}
It is convenient
to provide the intensive counterpart of these constraints while
introducing the fractions of presence of each phase, namely the volume fraction
$\alpha_k=V_k/V\in [0,1]$, the mass fraction $y_k=M_k/M\in [0,1]$ and the entropy
fraction $z_k=S_k/S\in [0,1]$, such that
\begin{equation}
  \label{eq:tauk}
  y_k\tau_k = \alpha_k \tau,\quad y_ks_k =z_ks. 
\end{equation}
Then the intensive counterpart of the extensive constraints reads 
\begin{equation}
  \label{eq:int_cons}
  \begin{cases}
    1 = \alpha_1+\alpha_2,\\
    1= y_1+y_2,\\
    1 = z_1+z_2+z_\mathrm{i},
  \end{cases}
\end{equation}
where $z_\mathrm{i}=S_\mathrm{i}/S\in [0,1]$ stands for the entropy fraction of the
interface.

We now turn to the definition of the 
extensive energy of the whole system. It corresponds to the sum of the energies
of each part, namely
\begin{equation}
  \label{eq:mix_energy}
  E (M,V,S,A_\mathrm{i})= E_1(M_1,V_1,S_1)+E_2(M_2,V_2,S_2)+E_\mathrm{i}(S_\mathrm{i},A_\mathrm{i}).
\end{equation}
Using the
 fractions definitions, the total
 derivative of $E$ reads
 \begin{equation*}
   \begin{aligned}
     \mathrm{d}E &=\sum_{k=1}^2 \bigl( T_k\mathrm{d}S_k -p_k\mathrm{d}V_k +\mu_k\mathrm{d}M_k\bigr)+T_\mathrm{i}\mathrm{d}S_\mathrm{i}+\gamma_\mathrm{i}
     \mathrm{d}A_\mathrm{i}\\
     &=\sum_{k=1}^2\bigl( T_kz_k\mathrm{d}S + ST_k\mathrm{d}z_k
     -p_k\alpha_k\mathrm{d}V_k - 
     Vp_k\mathrm{d}\alpha_k\bigr. \\
     &\qquad \bigl.+ y_k\mu_k\mathrm{d}M_k + M\mu_k\mathrm{d}y_k\bigr) \\
     &+ T_\mathrm{i}z_\mathrm{i}\mathrm{d}S+ ST_\mathrm{i}\mathrm{d}z_\mathrm{i}+\gamma_\mathrm{i} a_\mathrm{i}\mathrm{d}V + \gamma_\mathrm{i} V\mathrm{d}a_\mathrm{i}.
   \end{aligned}
 \end{equation*}
 Reorganizing the terms and using the intensive constraints
 \eqref{eq:int_cons}, one obtains
 \begin{Prop}
   The extensive energy satisfies
 \begin{equation}
   \label{eq:diff_mixt_energy}
   \begin{aligned}
     \mathrm{d}E &=(z_1T_1+z_2T_2+z_\mathrm{i}T_\mathrm{i})\mathrm{d}S - (\alpha_1p_1+\alpha_2 p_2
     -a_\mathrm{i}\gamma_\mathrm{i})\mathrm{d}V\\
     &\quad+ y_1(\mu_1-\mu_2)\mathrm{d}M\\
     &\quad+ S ((T_1-T_\mathrm{i})\mathrm{d}z_1 + (T_2-T_\mathrm{i})\mathrm{d}z_2)\\
     &\quad-V ((p_1-p_2)\mathrm{d}\alpha_1- \gamma_\mathrm{i}  \mathrm{d}a_\mathrm{i})\\
     &\quad+M(\mu_1-\mu_2)\mathrm{d}y_1.
   \end{aligned}
 \end{equation}
 As a consequence, the temperature, pressure and chemical
potential of the fluid-interface system have natural definitions in terms of the phasic and interfacial quantities:
\begin{equation}
  \label{eq:mixt_potentials}
  \begin{cases}
    T:= z_1T_1+z_2T_2+z_\mathrm{i}T_\mathrm{i}, \\
    p:=\alpha_1p_1+\alpha_2 p_2
    -a_\mathrm{i}\gamma_\mathrm{i}, \\
    \mu:=y_1\mu_1+y_2\mu_2.
  \end{cases}
\end{equation}
\end{Prop}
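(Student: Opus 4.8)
The plan is to read the statement as a change of variables and an identification of partial derivatives. First I would start from the additive decomposition \eqref{eq:mix_energy}, $E=E_1+E_2+E_\mathrm{i}$, differentiate it, and plug in the extensive phasic Gibbs forms \eqref{eq:Gibbs_ext} (for $k=1,2$) together with the interfacial Gibbs form \eqref{eq:int_gibbs_i}. This gives at once
\[
  \mathrm{d}E=\sum_{k=1,2}\bigl(T_k\mathrm{d}S_k-p_k\mathrm{d}V_k+\mu_k\mathrm{d}M_k\bigr)+T_\mathrm{i}\mathrm{d}S_\mathrm{i}+\gamma_\mathrm{i}\mathrm{d}A_\mathrm{i},
\]
which is exactly the first line of the computation displayed before the statement. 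Everything that remains is the passage from the per-part extensive coordinates $(S_k,V_k,M_k,S_\mathrm{i},A_\mathrm{i})$ to the global coordinates $(S,V,M,A_\mathrm{i})$ supplemented by the fractions $(z_1,z_2,\alpha_1,y_1)$.

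Next I would substitute the defining relations $S_k=z_kS$, $V_k=\alpha_kV$, $M_k=y_kM$, $S_\mathrm{i}=z_\mathrm{i}S$ and $A_\mathrm{i}=a_\mathrm{i}V$ (the latter being \eqref{eq:a}), expand every differential by the product rule ($\mathrm{d}S_k=z_k\mathrm{d}S+S\mathrm{d}z_k$, $\mathrm{d}A_\mathrm{i}=a_\mathrm{i}\mathrm{d}V+V\mathrm{d}a_\mathrm{i}$, and so on) and regroup. Collecting the coefficients of the global differentials and invoking the intensive constraints \eqref{eq:int_cons}, i.e. $z_1+z_2+z_\mathrm{i}=1$, $\alpha_1+\alpha_2=1$, $y_1+y_2=1$, produces the first three lines of \eqref{eq:diff_mixt_energy}: the coefficient of $\mathrm{d}S$ is $z_1T_1+z_2T_2+z_\mathrm{i}T_\mathrm{i}$, that of $\mathrm{d}V$ is $-(\alpha_1p_1+\alpha_2p_2-a_\mathrm{i}\gamma_\mathrm{i})$, and that of $\mathrm{d}M$ is the mixture chemical potential $y_1\mu_1+y_2\mu_2$. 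For the remaining terms I would differentiate the same three constraints, $\mathrm{d}z_\mathrm{i}=-(\mathrm{d}z_1+\mathrm{d}z_2)$, $\mathrm{d}\alpha_2=-\mathrm{d}\alpha_1$, $\mathrm{d}y_2=-\mathrm{d}y_1$, substitute and factor, obtaining $S\bigl((T_1-T_\mathrm{i})\mathrm{d}z_1+(T_2-T_\mathrm{i})\mathrm{d}z_2\bigr)$, $-V\bigl((p_1-p_2)\mathrm{d}\alpha_1-\gamma_\mathrm{i}\mathrm{d}a_\mathrm{i}\bigr)$ and $M(\mu_1-\mu_2)\mathrm{d}y_1$, which are precisely the last three lines of \eqref{eq:diff_mixt_energy}. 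The identities \eqref{eq:mixt_potentials} are then read off by defining $T$, $-p$ and $\mu$ as the partial derivatives of $E$ in $S$, $V$ and $M$, i.e. as the coefficients of $\mathrm{d}S$, $\mathrm{d}V$, $\mathrm{d}M$ in \eqref{eq:diff_mixt_energy}, the three last lines being the contributions that vanish at thermodynamical equilibrium.

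The computation is essentially bookkeeping, so the only point that deserves genuine care is the interfacial term $\gamma_\mathrm{i}\mathrm{d}A_\mathrm{i}$: because in \eqref{eq:a} the interfacial area is normalised by the \emph{mixture} volume $V$, expanding $\mathrm{d}A_\mathrm{i}=a_\mathrm{i}\mathrm{d}V+V\mathrm{d}a_\mathrm{i}$ sends a genuine contribution $a_\mathrm{i}\gamma_\mathrm{i}$ into the coefficient of $\mathrm{d}V$, which is exactly what makes the surface tension enter the mixture pressure $p$ in \eqref{eq:mixt_potentials}; one should also keep in mind that the interface produces no $\mathrm{d}M$- or $\mathrm{d}y_1$-type term since it carries no mass, and feeds into $\mathrm{d}S$ only through $z_\mathrm{i}$. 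Beyond carefully tracking the three fraction-sums and their differentiated versions, I expect no real obstacle: there is no analytic difficulty, only the exactness of the change of variables.
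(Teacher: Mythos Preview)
Your proposal is correct and follows exactly the paper's own argument: differentiate the additive decomposition \eqref{eq:mix_energy}, insert the Gibbs forms \eqref{eq:Gibbs_ext} and \eqref{eq:int_gibbs_i}, expand the fraction relations by the product rule, and regroup using the constraints \eqref{eq:int_cons}. Note incidentally that your (correct) coefficient $y_1\mu_1+y_2\mu_2$ for $\mathrm{d}M$ is the one consistent with the definition of $\mu$ in \eqref{eq:mixt_potentials}, whereas the displayed formula \eqref{eq:diff_mixt_energy} carries what appears to be a typo, $y_1(\mu_1-\mu_2)\,\mathrm{d}M$.
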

In absence of the surface tension, the mixture pressure coincides with
the mixture pressure classic bi-fluid or two-phase models
\cite{kapila}. When accounting for surface tension, the mixture
pressure is 
exactly the one of the  two-phase flow model, derived in
\cite{hillairet:hal-03621307} by homogenization techniques.
This pressure also appears in jump conditions of Euler-Korteweg
system, see \cite{Rohde2012-cemracs}.
The pressure we get is also close to the pressure
term derived
in \cite{Kokh19, CordesseEtAl20} in the context of two-phase flows
with surface tension.

For a given state
$(M,V,E{,A_\mathrm{i}})$, and according to the second principle of
thermodynamics, the thermodynamical equilibrium corresponds to a
minimum of 
the energy $E$ defined in \eqref{eq:mix_energy} under the extensive
constraints \eqref{eq:imm_cons}-\eqref{eq:ent_cons}. Thus, in the interior of the
constraint set, the derivatives
of $E$ with respect to independant variables cancel, leading to a
characterization of the thermodynamical equilibrium in terms of
phasic potentials.
\begin{Prop}
  \label{prop:equi}
  According to the differential form \eqref{eq:diff_mixt_energy}, the
  thermodynamical equilibrium is characterized by
  \begin{equation}
    \label{eq:equi}
    \begin{cases}
      \mu_1=\mu_2,\\
      T_1=T_2=T_\mathrm{i},\\
      \gamma_\mathrm{i} \mathrm{d}a_\mathrm{i} -
      (p_1-p_2)\mathrm{d}\alpha_1=0.
    \end{cases}
  \end{equation}
\end{Prop}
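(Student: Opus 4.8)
The plan is to obtain \eqref{eq:equi} as the first-order optimality conditions for the constrained minimization of $E$, read directly off the differential form \eqref{eq:diff_mixt_energy}. Fix the extensive state $(M,V,S,A_\mathrm{i})$; then any admissible variation of the partition keeps $M$, $V$, $S$ constant, so $\mathrm{d}M=\mathrm{d}V=\mathrm{d}S=0$ and \eqref{eq:diff_mixt_energy} collapses to
\begin{equation*}
  \mathrm{d}E = S\bigl((T_1-T_\mathrm{i})\,\mathrm{d}z_1+(T_2-T_\mathrm{i})\,\mathrm{d}z_2\bigr)-V\bigl((p_1-p_2)\,\mathrm{d}\alpha_1-\gamma_\mathrm{i}\,\mathrm{d}a_\mathrm{i}\bigr)+M(\mu_1-\mu_2)\,\mathrm{d}y_1 .
\end{equation*}
In the interior of the constraint set \eqref{eq:int_cons} the fractions $z_1,z_2,y_1,\alpha_1$ can be varied independently (the complementary fractions $z_\mathrm{i}=1-z_1-z_2$, $y_2=1-y_1$, $\alpha_2=1-\alpha_1$ absorbing the change), and a minimiser must satisfy $\mathrm{d}E=0$ along every such variation.

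Next I would isolate the three decoupled directions. Taking a variation supported on $z_1$ alone forces $S(T_1-T_\mathrm{i})=0$, hence $T_1=T_\mathrm{i}$ since $S>0$; likewise a variation in $z_2$ gives $T_2=T_\mathrm{i}$, and a variation in $y_1$ gives $M(\mu_1-\mu_2)=0$, i.e. $\mu_1=\mu_2$ since $M>0$. This recovers the first two lines of \eqref{eq:equi}. The contribution that is left is the mechanical one, $-V\bigl((p_1-p_2)\,\mathrm{d}\alpha_1-\gamma_\mathrm{i}\,\mathrm{d}a_\mathrm{i}\bigr)$, whose vanishing (with $V>0$) is exactly the third relation of \eqref{eq:equi}. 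To upgrade these necessary conditions to a genuine characterization, I would invoke convexity of the $E_k$ together with the homogeneity of $E_\mathrm{i}$, which makes $E$ convex on the affine constraint set, so that any interior critical point is a global minimiser.

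The step that deserves real care — and the one I expect to be the main obstacle — is the treatment of the pair $(\alpha_1,a_\mathrm{i})$. Unlike the entropy and mass fractions, the interfacial area density $a_\mathrm{i}=A_\mathrm{i}/V$ is tied to the mixture volume and to the interfacial area, so that $\mathrm{d}\alpha_1$ and $\mathrm{d}a_\mathrm{i}$ are coupled through the geometry of the interface rather than freely independent. Consequently one cannot split the mechanical equilibrium into the two separate identities $p_1=p_2$ and $\gamma_\mathrm{i}=0$; the correct statement is the single one-form identity $\gamma_\mathrm{i}\,\mathrm{d}a_\mathrm{i}-(p_1-p_2)\,\mathrm{d}\alpha_1=0$, a generalized Laplace law. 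This is precisely why \eqref{eq:diff_mixt_energy} was arranged so that the $\mathrm{d}\alpha_1$ and $\mathrm{d}a_\mathrm{i}$ contributions are grouped, and it is what I would emphasize when writing up the argument.
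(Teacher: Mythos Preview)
Your proposal is correct and follows the same reasoning the paper uses (stated just before the proposition): set $\mathrm{d}M=\mathrm{d}V=\mathrm{d}S=0$ in \eqref{eq:diff_mixt_energy} and read off the stationarity conditions in the interior of the constraint set, with the $(\alpha_1,a_\mathrm{i})$ pair treated as geometrically coupled so that the mechanical condition survives only as the single one-form identity. One small wording fix: you write ``Fix the extensive state $(M,V,S,A_\mathrm{i})$'' and then let $a_\mathrm{i}=A_\mathrm{i}/V$ vary, which is inconsistent; simply fix $(M,V,S)$ and treat $a_\mathrm{i}$ as an internal variable tied to $\alpha_1$ through the interface geometry, exactly as your final paragraph explains.
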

The two first equalities of \eqref{eq:equi} are classic: they denote the thermal
equilibrium in the fluid-interface system and the mass transfer
between the two fluid phases. The last
(differential) relation represents the mechanical equilibrium, and
brings out some comments:
\begin{itemize}
\item As $\gamma_\mathrm{i}=0$, that is for a planar interface, one recovers that
  the
  mechanical equilibrium corresponds to the saturation of the phasic
  pressures $p_1=p_2$ (see for instance \cite{callen85});
\item Assume that the phase 1 occupies a spherical bubble of radius
  $R$. Then its volume is $V_1=4 \pi R^3/3$ and the interfacial area 
  is $A_\mathrm{i}=
  4\pi R^2$.
   On the other hand the differential relation in \eqref{eq:equi} gives
  \begin{equation*}
    \gamma \mathrm{d} \left(\dfrac{A_\mathrm{i}}{V}\right)
    -(p_1-p_2) \mathrm{d} \left(\dfrac{V_1}{V}\right)=0
  \end{equation*}
  Expressing this latter formula  in terms of the radius $R$, it leads
  to the Young-Laplace law
  \begin{equation*}
    p_1-p_2 = \dfrac{2 \gamma_\mathrm{i}}{R}.
  \end{equation*}
  Classically the  Young-Laplace law involves the mean curvature which
  corresponds here to the inverse of the radius.
  When the radius tends to $+\infty$, the surface becomes planar and
  one recovers the equality of the phasic pressures.
\end{itemize}

 \subsection{Another characterization of the thermodynamical
  equilibrium using free energies}
\label{sec:grand_canonical}

In the context of two-phase flows in porous media,
Sma\"i proposed in \cite{smai2020} to minimize the free energy of the mixture
instead of minimizing the energy. The advantage is that the free
energy of the mixture (also called canonical grand potential in
the porous media framework) is solely a function of the mixture
temperature and the phasic pressures.

In the extensive framework, the free energy $\Omega_k$ of the phase $k=1,2$ is
defined as the (total) Legendre transform of the energy $E_k$:
\begin{equation}
  \label{eq:Omegak}
  \Omega_k = E_k -T_kS_k-\mu_kM_k.
\end{equation}
Differentiating \eqref{eq:Omegak} and using the Gibbs relation
\eqref{eq:Gibbs_ext} give
\begin{equation}
  \label{eq:dOmegak}
  \mathrm{d}\Omega_k = -p_k\mathrm{d}V_k-S_k\mathrm{d}T_k-M_k\mathrm{d}\mu_k.
\end{equation}
In order to introduce intensive potential, it is convenient to scale
with respect to the volume $V_k$ to define the intensive (volumic) free
energy
$\omega_k = \Omega_k/V_k$.
Then using the definitions \eqref{eq:tauk}, it holds
\begin{equation*}
  \begin{aligned}
    \alpha_k \omega_k = \dfrac{e_k}{\tau_k}\alpha_k - T_k
    \dfrac{s_k}{\tau_k}\alpha_k-\dfrac{\mu_k}{\tau_k}\alpha_k,
  \end{aligned}
\end{equation*}
from which one deduces the following differential form
\begin{equation}
  \label{eq:d_akomegak}
  \mathrm{d}(\alpha_k \omega_k) = -p_k \mathrm{d}\alpha_k - \dfrac{\alpha_k
    s_k}{\tau_k}\mathrm{d}T_k - \dfrac{\alpha_k}{\tau_k}\mathrm{d}\mu_k,
\end{equation}
using the intensive relations \eqref{eq:int_gibbs} and \eqref{eq:muk}.

As the interfacial potentials are concerned, the free energy is
defined as 
a partial Legendre transform of the energy $E_\mathrm{i}$, namely
\begin{equation}
  \label{eq:OmegaI}
  \Omega_\mathrm{i} = E_\mathrm{i}-T_\mathrm{i}S_\mathrm{i}.
\end{equation}
Using the interfacial Gibbs relation \eqref{eq:int_gibbs_i}, it yields
\begin{equation}
  \label{eq:dOmegaI}
  \mathrm{d}\Omega_\mathrm{i} = -S_\mathrm{i}\mathrm{d}T_\mathrm{i}+\gamma_\mathrm{i}\mathrm{d}A_\mathrm{i}.
\end{equation}
The appropriate intensive free energy is deduced by scaling with
respect to the interfacial area: $\omega_\mathrm{i}=\Omega_\mathrm{i}/A_\mathrm{i}$.
Hence the intensive free energy reads
\begin{equation}
  \label{eq:aomegai}
  a_\mathrm{i}\omega_\mathrm{i} = a_\mathrm{i}e_\mathrm{i}-T_\mathrm{i} a_\mathrm{i}s_\mathrm{i},
\end{equation}
whose differential is
\begin{equation}
  \label{eq:daomegaI}
  \mathrm{d}(a_\mathrm{i} \omega_\mathrm{i} )= a_\mathrm{i}s_\mathrm{i} \mathrm{d}T_\mathrm{i} +\gamma_\mathrm{i} \mathrm{d}a_\mathrm{i},
\end{equation}
according to \eqref{eq:daeI}.

For the fluid-interface system, the 
extensive free energy corresponds to the sum of the phasic and
interfacial free energies
\begin{equation*}
  \Omega =\Omega_1+\Omega_2+\Omega_\mathrm{i},
\end{equation*}
whose intensive formulation is
\begin{equation}
  \label{eq:int_mixt_omega}
  \omega=\alpha_1\omega_1+\alpha_2 \omega_2+ a_\mathrm{i} \omega_\mathrm{i}.
\end{equation}
According to the differentials \eqref{eq:d_akomegak} and
\eqref{eq:daomegaI}, it yields
\begin{equation}
  \label{eq:d_omega_mix}
  d\omega = -(p_1-p_2)\mathrm{d}\alpha_1 + \gamma_\mathrm{i} \mathrm{d}a_\mathrm{i} -
  \alpha_1\mathrm{d}p_1 -\alpha_2\mathrm{d}p_2 + a_\mathrm{i}s_\mathrm{i}\mathrm{d}T_\mathrm{i},
\end{equation}
since $\alpha_1+\alpha_2=1$.

At thermodynamical equilibrium, the phasic and interfacial potentials
agree with \eqref{eq:equi}. As a consequence the thermodynamical
equilibrium has to comply with an equation of state compatible with
\begin{equation}
  \label{eq:4}
  d\omega =-\alpha_1\mathrm{d}p_1 -\alpha_2\mathrm{d}p_2 + as_\mathrm{i}\mathrm{d}T.
\end{equation}
This differential implies that the equation of state of the
fluid-interface system
can be expressed in term $\omega$, seen as a function of
$T$ and $p_k$, $k=1,2$, such that
\begin{equation*}
  a s_\mathrm{i} = \dfrac{\p \omega}{\p T}(T,p_1,p_2),\quad \alpha_k = - \dfrac{\p \omega}{\p p_k}(T,p_1,p_2).
\end{equation*}
The idea of Sma\"i is to take advantage of this alternative description of the
thermodynamical equilibrium to get rid of the complex description of
the interface: from the equation of state $\omega(T,p_1,p_2)$, one
recovers all the information relative to the interface, without
explicitly computing the interfacial area.

This approach is not developed here since we precisely want to derive
an evolution equation of the interfacial area.

\subsubsection{Potential energy candidate}
\label{sec:potent-energy-cand}

We can now turn to the definition of the potential energy which will be used
in the Lagrangian formulation.
A natural proposition would be to consider the intensive mixture
internal energy when scaling the extensive energy \eqref{eq:mix_energy} by
the total mass $M$.
Then for a given intensive state $(\tau,s,a_\mathrm{i})$, accounting for the
intensive constraints \eqref{eq:int_cons},
the intensive energy would read
\begin{equation}
  \label{eq:int_mixt_energy0}
  e(\tau,s,a_\mathrm{i},(y_k)_k,(\alpha_k)_k, (z_k)_k,z_\mathrm{i}) = y_1
  e_1(\tau_1,s_1)+y_2(\tau_2,s_2)+ a_\mathrm{i} \tau
  e_\mathrm{i}\left(\dfrac{z_\mathrm{i}}{a}\dfrac{s}{\tau}\right), 
\end{equation}
with notations \eqref{eq:tauk} of the phasic quantities.

However it turns out that this choice of variables is not appropriate.
Indeed
it is not clear how the entropy fractions $z_k$ evolve along
trajectories when applying the Stationary Action Principle whereas specific
entropies are conserved along trajectories, according to \cite{GG99,GS02}.
Hence it is more convenient to express the intensive 
fluid-interface internal energy as a function of the intensive entropies $s$ and
$s_k$, $k=1,2$ rather than using the entropy fractions $z_k$. 

In the sequel, we choose to express the intensive energy as a
function of
\begin{equation}
  \label{eq:Btilde}
  \tilde{\mathbf{B}}=\{\rho, s,s_1, s_2, a_\mathrm{i}, y, \alpha\},
\end{equation}
where $\rho=1/\tau$ denotes the mixture density and $y:=y_1$ and
$\alpha:=\alpha_1$. It reads then
\begin{equation}
  \label{eq:int_mixt_energy}
  \begin{aligned}
    e( \tilde{\mathbf{B}}) =& y e_1\left(\dfrac{\alpha}{y
        \rho},s_1\right)+(1-y)e_2\left(\dfrac{1-\alpha}{(1-y)
        \rho},s_2\right)\\
    &+ \dfrac{a_\mathrm{i}}{\rho}
    e_\mathrm{i}\left(\dfrac{s-ys_1-(1-y)s_2}{a_\mathrm{i}}\rho\right).
  \end{aligned}
\end{equation}
Observe that one makes use of the extensive relation
\eqref{eq:ent_cons} on the entropies to express the interfacial entropy $s_\mathrm{i}$ as a
function of $s$, $s_1$ and $s_2$, namely
\begin{equation}
  \label{eq:intensive_entropies}
  s= y s_1 + (1-y) s_2 + \dfrac{a_\mathrm{i}}{\rho}s_\mathrm{i}.
\end{equation}
\begin{Rem}
  \label{rem:dissip}
  In \cite[paragraph 2.1.3.3]{cordesse20}, the author points out the
  importance of the choice of 
  variables on which the Lagrangian functional depends.
  This is also emphasized in the work of Gavrilyuk \cite{Gavrilyuk2020-stbg}.
  Indeed if specific entropies are convenient variables
  for computations, the fact that they are conserved along
  trajectories prohibits any
  interaction between the phases. The fluid-interface entropy will
  also be
  conserved since only reversible processes can be
  depicted by the SAP.  However it is possible to add relaxation
  source terms \textit{a posteriori}, 
  in agreement with the second law of thermodynamics.
  See \cite[Paragraph 3.5]{dibattista:tel} for a presentation of the
  method when dissipation is due to pulsating behaviour of bubbles in
  two-phase flows.
\end{Rem}

\section{Derivation of the evolution equations by means of Stationary
  Action Principle}
\label{sec:deriv-evol-equat}

Accounting for the previous characterization of the thermodynamical
equilibrium, we now turn to the modelling of the fluid dynamics.
The objective is to derive the Euler-type equations satisfied
by the fluid-interface system using the Stationary Action
Principle, following the serie of works \cite{GG99,GavrilyukSaurel02,Drui_these,Kokh19, 
CordesseEtAl20, dibattista:tel,loison2023}.

We focus on homogeneous two-phase flows, in the sense that
the two phases evolve with the same velocity field $\mathbf{u}\in
\mathbb R^3$.
Note that considering distinct velocities is possible as in
\cite{GavrilyukSaurel02}.

The variational approach and the Hamilton's principle of stationary
action rely on the definition of an appropriate
Lagrangian $L$. This Lagrangian is the difference of a kinetic energy and a
potential energy.
The potential energy we propose to consider has been derived in the previous
section, see \eqref{eq:int_mixt_energy}.
As far as the kinetic one is concerned, a small review of recent
models is given in Section \ref{sec:T}, focusing on the so-called \textit{two-scale
kinetic} modelling brought forward  in \cite{Drui_these, cordesse20}.

In Section \ref{sec:L_assum} are stated the main lines of the SAP as well as the
additional assumptions we make (total and partial mass conservations
for instance). As a result is presented the final set of equations, in
its rough form.

\subsection{A non-exhaustive review of kinetic energy}
\label{sec:T}

Recent references takle the derivation of the kinetic energy, motivated by
the initial works of Gavrilyuk and coauthors \cite{GG99,GavrilyukSaurel02}. In the latter
propositions, the kinetic energy $L_{kin}$ is composed of a classic bulk
energy linked to the translational motion of the
fluid and a small scale contribution $T_{pulse}$.
For instance in \cite{GavrilyukSaurel02}, considering distinct
velocities for both the phases $k=1,2$ and the interface, it yields
\begin{equation*}
  L_{kin} = \sum_{k=1}^2 \rho_k \dfrac{|\mathbf{u}_k|^2}{2} +T_{pulse},
\end{equation*}
where $\mathbf{u}_k$ stands for the velocity field of the phase $k$
and
\begin{equation*}
  T_{pulse} =\dfrac m 2
  \left( \dfrac{\mathrm D_\mathrm{\mathrm{i}} \alpha}{\mathrm D t}\right)^2,
\end{equation*}
where $\dfrac{\mathrm D_\mathrm{\mathrm{i}} \cdot}{\mathrm D t}$ is the material
derivative associated to the velocity of the interface $\mathbf{u}_\mathrm{i}$,
namely
$\dfrac{\mathrm D_\mathrm{i} \cdot}{\mathrm D t} = \p_t \cdot + \mathbf{u}_\mathrm{i}
\cdot \nabla_{\mathbf{x}} \cdot$.

According to the authors, the second term is a pulsation kinetic
energy, where the coefficient $m$ and the
interfacial velocity $\mathbf u_\mathrm{i}$ are given by appropriate closure laws.
Considering a one-velocity model, Drui proposes in \cite{Drui_these}
to consider $T_{pulse} = \dfrac 1 2 \nu(\alpha) |\mathrm D_t\alpha|^2$. The
function $\nu$ corresponds to the inertia associated with the motion of
the interface which depends on the volume fraction $\alpha$ only.
Another improvement is introduced by Cordesse \cite{cordesse20, Kokh19},
where the function $\nu$ is a function of the interfacial area, namely
$T_{pulse} = \dfrac 1 2 m \dfrac{|\mathrm D_t\alpha|^2}{a_\mathrm{i}^2}$.
Here and in the sequel, the material derivative is defined using the
common velocity field $\mathbf{u}$
\begin{equation*}
 \mathrm D_t \cdot  =  \p_t \cdot + \mathbf{u}\cdot \nabla_{\mathbf{x}}  \cdot.
\end{equation*}
This last expression of $T_{pulse}$ is derived from geometrical
considerations: when the
interface is subjected to a small displacement, the interfacial area
$a_\mathrm{i}$ and the volume fraction $\alpha$ vary as well, and the
relationships between these quantities involve the local curvature of
the interface and the surface tension parameter, see \cite[Chapter
3]{cordesse20} for more details.
Finally in \cite{dibattista:tel} the function $\nu$ is no longer an
explicit function of $\alpha$ or $a_\mathrm{i}$. The pulsating energy reads
\begin{equation*}
T_{pulse} = \dfrac 1 2 \nu(\alpha,a_\mathrm{i}) {|\mathrm D_th|^2},
\end{equation*}
where $h$ is the local deformation of the interface, which satisfies
differential relations involving the interfacial area, the local
curvature and the volume fraction.

Among all the propositions, what is mandatory is to make the kinetic
energy $L_{kin}$ depends on $\mathrm D_t \alpha$, otherwise there will we be no hope to
get an evolution equation on $\alpha$. For the same reason and
because we want an evolution equation of the interfacial area density,
we propose
to consider also a term involving $\mathrm D_t a_\mathrm{i}$:
\begin{equation}
  \label{eq:kin_energy_T}
 L_{kin} = \dfrac{1}{2}\rho |\mathbf{u}|^2 + \dfrac{m}{2}|\mathrm D_t \alpha|^2 +
\dfrac{\nu}{2}|\mathrm D_t a_\mathrm{i}|^2,  
\end{equation}
where $m$ and $\nu$ are constants (with the appropriate dimensions,
namely $m [kg\cdot m ^{-1}]$ and $\nu[kg \cdot m]$).
Doing so ensures to get an evolution equation of the interfacial
area density, without considering any additional quantities as local
curvature or interface displacement as in \cite{cordesse20, Kokh19}.

\subsection{The Lagrangian functional and additional assumptions}
\label{sec:L_assum}
We introduce the vector of variables $\mathbf{B}$
\begin{equation}
  \label{eq:B}
  \mathbf{B}:=\{\rho, s , s_1, s_2, a_\mathrm{i}, y, \alpha, \mathbf{u}, \mathrm D_t
  \alpha, \mathrm D_t a_\mathrm{i}\},
\end{equation}
which corresponds to the vector $\tilde{\mathbf{B}}$, defined in
\eqref{eq:Btilde}, completed by the variables involved in the kinetic
energy $L_{kin}$, that are $\mathbf{u}$, $\mathrm D_t
\alpha$ and $\mathrm D_t a_\mathrm{i}$.

The Lagrangian $L$, function of $\mathbf{B}$, is the difference of the
kinetic and the potential
contribution
\begin{equation}
  \label{eq:L}
  L(\mathbf{B}) = L_{kin} - L_{pot},
\end{equation}
where $ L_{kin} (\mathbf{B})$ is defined in \eqref{eq:kin_energy_T} and 
$L_{pot} (\mathbf{B}) = \rho e(\tilde{\mathbf{B}})$, with
$e(\tilde{\mathbf{B}})$ defined in \eqref{eq:int_mixt_energy}.

Before going further with the variational method, we make additional
assumptions that govern the fluid-interface system.
First we assume masses conservation, in the sense that
\begin{equation}
  \label{eq:mass_conserv}
  \begin{aligned}
    &\p_t \rho + \div_{\mathbf{x}} (\rho \mathbf{u}) = 0,\\
    &\mathrm D_t y =0.
  \end{aligned}
\end{equation}
One emphasizes that although the modelling presented in Section \ref{sec:therm-equil} allows mass exchange between the
two phases, it is not the case here. This is due to the fact that SAP
is valid for reversible processes only.
For the same reasons, we also assume that the specific entropies are conserved along
trajectories
\begin{equation}
  \label{eq:entrop_cons}
 \mathrm D_t s=0, \qquad \mathrm D_t s_k=0, \qquad k=1,2,
\end{equation}
following \cite{GG99, GavrilyukSaurel02, cordesse20}. 
Notice that, since the specific phasic
entropies are
conserved, the interface intensive entropy (which is relative to the
interfacial area $A_\mathrm{i}$ and not
to mass $M$) is not conserved along trajectories but satisfies
\begin{equation*}
  \mathrm D_t (s_\mathrm{i} a_\mathrm{i} \tau)=0,
\end{equation*}
that is to say, using extensive variables,
$s_\mathrm{i} a_\mathrm{i} \tau =S_\mathrm{i}/M$ is constant along trajectories.

\subsection{Variational principle}
\label{sec:LAP}

This paragraph recalls the classic lines of the Stationary Action
Principle, whose application to the two-phase flow modeling has been the
subjects of numerous works, including
\cite{Bedford80, Bedford21, Geurst86, Gouin90, BGP23}.
See also \cite{dibattista:tel} for a
synthetic presentation of the method and an
overview of the technic in the two-fluid framework.

Consider a volume $\omega(t)\in \mathbb R^3$ occupied by
the fluid-interface system for time $t\in [t_1,t_2]$ and denote $\Omega=\{(t, \mathbf{x})\in
\times [t_1,t_2]\times \mathbb R^d|\; \mathbf{x}\in \omega(t), \, t_1\leq t\leq t_2\}$.
Following Section \ref{sec:L_assum}, we assume the flow to  be fully characterized by
the quantities $(t,\mathbf{x})\mapsto \mathbf{B}$ and by the
constitutive constraints \eqref{eq:mass_conserv}-\eqref{eq:entrop_cons}.
We now define the Hamiltonian Action as the space-time integral of the
Lagrangian functional \eqref{eq:L}
\begin{equation}
  \label{eq:action}
  A(\mathbf{B}) = \int_\Omega L(\mathbf{B})(\mathbf{x},t) \mathrm d
  \mathbf{x}\mathrm d t,
\end{equation}
and apply the Stationary Action principle.
If $(t,\mathbf{x}) \mapsto \mathbf{\bar B}$ is a physically relevant
transformation of the system,
it  is the solution of a variational problem leading to a PDE
system. The methodology is to
consider a family of perturbation
$(t,\mathbf{x},\zeta)\mapsto \mathbf{B}_\zeta$ of $\mathbf{\bar B}$,
parametrized by $\zeta\in [0,1]$ such that
\begin{itemize}
\item the physical path is obtained when $\zeta=0$:
  \begin{equation*}
    \mathbf{B}_\zeta(t,\mathbf{x},\zeta=0) = \mathbf{\bar B} (t,\mathbf{x}),
  \end{equation*}
\item $\mathbf{B}_\zeta$ satisfies the conservation constraints
  \eqref{eq:mass_conserv} and \eqref{eq:entrop_cons} for all $\zeta\in
  [0,1]$,
\item $\mathbf{B}_\zeta(t,\mathbf{x},\zeta)= \mathbf{\bar B} (t,
  \mathbf{x})$ for $(t,\mathbf{x},\zeta) \in \partial\Omega\times[0,1]$.
\end{itemize}
The Stationary Action Principle states that $ \mathbf{\bar B} $ is
physically relevant if it is a stationary point of 
$\zeta\mapsto  A(\mathbf{B}_\zeta)$, that is
\begin{equation}
  \label{eq:Action!}
  \dfrac{\mathrm d A (\mathbf{B}_\zeta)}{\mathrm d\zeta}(0) = 0.
\end{equation}
This stationary condition yields the governing set of PDEs of motion
without dissipative process.
For $b\in \mathbf{\bar B}$,  denoting
\begin{equation*}
  \delta_\zeta b(t,\mathbf{x})  = \left(\dfrac{\p b_\zeta }{\p \zeta}
  \right)_{| t,\mathbf x}(t,\mathbf{x},\zeta=0)
\end{equation*}
a family of infinitesimal transformations, 
the identity \eqref{eq:Action!} reads
\begin{equation}
  \label{eq:SAP}
  \dfrac{\mathrm d A (\mathbf{B}_\zeta)}{\mathrm d\zeta}(\zeta=0) = \displaystyle\int_{\Omega}
  \sum_{b\in \mathbf B} \dfrac{\p L}{\p b} \delta_\zeta b \mathrm d\mathbf{x} \mathrm
  d t.
\end{equation}
Infinitesimal
variations are related through the conservation
principles \eqref{eq:mass_conserv} and \eqref{eq:entrop_cons} (see
\cite{G11} and \cite{BGP23} for detailed computations)
\begin{itemize}
\item variation of density
  \begin{equation}
    \label{eq:var_mass}
    \delta \rho = -\div_{\mathbf{x}} (\rho\delta\mathbf{x}),
  \end{equation}
  where $\delta\mathbf{x}$ denotes the infinitesimal displacement
  $(t,\mathbf{x}) \mapsto \delta \mathbf{x}$ around the physical path,
  which complies with $\delta\mathbf{x}_{|t=t_1} =
  \delta\mathbf{x}_{|t=t_2}=0$ and $\delta\mathbf{x}_{|\partial \omega(t)}=0$. 
\item variation of velocity
  \begin{equation}
    \label{eq:var_u}
    \delta \mathbf{u}  = \mathrm D_t (\delta\mathbf{x}) -\nabla_{\mathbf{x}}
    \mathbf{u}\cdot \delta\mathbf{x},
  \end{equation}
  
\item conservation along trajectories of the fluid specific entropies and the
  mass fraction
  \begin{equation}
    \label{eq:var_s}
    \delta b = -\nabla_{\mathbf{x}} b \cdot \delta \mathbf{x}, \quad \text{for }
    b\in \{ s,y,z_1,z_2\}.
  \end{equation}
\end{itemize}

We now list all the contributions in \eqref{eq:SAP}.
\begin{itemize}
\item Density contribution: using the mass conservation
  \eqref{eq:mass_conserv}, one has
  \begin{equation}
    \label{eq:Arho1}
    \begin{aligned}
      \int_\Omega \dfrac{\p L}{\p \rho} \delta \rho
      \mathrm d\mathbf{x} \mathrm d t &= - \int_\Omega \dfrac{\p
        L}{\p \rho} \text{div}_{\mathbf{x}}(\rho\delta \mathbf{x}) \,\mathrm{d}\mathbf{x} \mathrm{d}t \\
      &=
      \int_\Omega\rho \nabla_{\mathbf{x}} \left(
        \dfrac{\p L}{\p \rho}\right)
      \cdot \delta \mathbf{x}
      \,\mathrm{d}\mathbf{x} \mathrm{d}t,
    \end{aligned}
\end{equation}
by integration by parts. In order to make the partial Legendre transform of
$L$ with respect to $\rho$ (written here as a function of $\mathbf{B}$)
\begin{equation}
  \label{eq:L*}
  L^{*,\rho} (\mathbf{B})= \rho \dfrac{\p L}{\p \rho} - L (\mathbf{B})
\end{equation}
appear, one develops
\begin{equation}
  \label{eq:Arho2}
  \begin{aligned}
      &\int_\Omega
  \dfrac{\p L}{\p \rho} \delta \rho \,\mathrm d\mathbf{x }
  \mathrm{d} t \\
  =& \int_\Omega\left[ \nabla_{\mathbf{x}}\left(\rho \dfrac{\p L}{\p
    \rho}\right) - \dfrac{\p L}{\p
    \rho} \nabla_{\mathbf{x}} \rho \right]\cdot \delta \mathbf{x} \,\mathrm{d}\mathbf{x}
  \mathrm{d}t\\
   =& \int_\Omega \left[\nabla_{\mathbf{x}} \left(\rho \dfrac{\p L}{\p
    \rho}-L\right)+ \nabla_{\mathbf{x}}  L - \dfrac{\p L}{\p
    \rho}\nabla_{\mathbf{x}}  \rho \right] \cdot \delta \mathbf{x} \,\mathrm{d}\mathbf{x}
  \mathrm{d}t\\
  =& \int_\Omega \left( \nabla_{\mathbf{x}} 
    L^{*,\rho} + \sum_{\substack{b \in \mathbf{B}\\ b\neq \rho}}
    \nabla_{\mathbf{x}}  b \dfrac{\p L}{\p b}\right) \cdot \delta \mathbf{x} \,\mathrm{d}\mathbf{x}
  \mathrm{d}t.
\end{aligned}
\end{equation}
\item Velocity contribution: according to \eqref{eq:var_u}, it holds
  \begin{equation}
    \label{eq:eqAu1}
      \int_\Omega
      \dfrac{\p L}{\p \mathbf{u}} \delta \mathbf{u} \,\mathrm d\mathbf{x} \mathrm dt
      =   \int_\Omega
      \dfrac{\p L}{\p \mathbf{u}} \left(\mathrm D_t (\delta \mathbf{x})
        -\nabla_{\mathbf{x}}  \mathbf{u} \cdot \delta \mathbf{x}\right) \mathrm{d}\mathbf{x} \mathrm{d}t.
    \end{equation}
    By definition of the material derivative $\mathrm D_t \cdot$ and using an
    integration by part, it holds
  \begin{equation}
    \label{eq:eqAu2}
    \begin{aligned}
      &\int_\Omega \dfrac{\p L}{\p \mathbf{u}} \delta
      \mathbf{u} \,\mathrm d\mathbf{x} \mathrm dt \\
      =& \int_\Omega \dfrac{\p L}{\p \mathbf{u}} \left[\p_t (\delta
        \mathbf{x})+ \mathbf{u}\cdot \nabla_{\mathbf{x}} (\delta\mathbf{x})-\nabla_{\mathbf{x}} 
        \mathbf{u} \cdot
      \delta \mathbf{x}\right] \mathrm{d}\mathbf{x} \mathrm{d}t\\
      =& -\int_\Omega
      \left( \p_t \left(\dfrac{\p L}{\p
            \mathbf{u}} \right)+ \text{div}_{\mathbf{x}}  \left(\mathbf{u} \dfrac{\p L}{\p
              \mathbf{u}}\right) +\dfrac{\p L}{\p \mathbf{u}} \nabla_{\mathbf{x}} 
          \mathbf{u}\right) \cdot \delta \mathbf{x}\, \mathrm{d}\mathbf{x}\mathrm{d}t.
    \end{aligned}
    \end{equation}
  \item Contributions of conserved quantities along trajectories:
    using \eqref{eq:var_s}, it holds for $b\in \{s, s_1,s_2,y\}$
    \begin{equation}
      \label{eq:Ab1}
        \int_{\Omega}
      \dfrac{\p L}{\p b} \delta b \, \mathrm d\mathbf{x} \mathrm dt
      =-\int_\Omega \dfrac{\p L }{\p b} \nabla_{\mathbf{x}}  b \cdot
      \delta \mathbf{x} \,\mathrm{d}\mathbf{x} \mathrm{d}t.
    \end{equation}
    
  \item Contributions in $\alpha$ and $a_\mathrm{i}$: the variation of the
    volume fraction $\alpha$ is not subjected to any
    constraint. Doing so ensures to get an evolution equation on
    $\alpha$. Therefore the variation $\delta \alpha$, involved with
    the family of transformations of the medium, is arbitrary.
    The same holds for the interfacial area density $a_\mathrm{i}$. Besides, the
    fact that they evolve independently will yield separate equations
    for the volume fraction and the interfacial area density.

  \item Contributions in $\mathrm D_t \alpha$ and $\mathrm D_t a_\mathrm{i}$: the variations of
    $\mathrm D_t \alpha$ (resp. $\mathrm D_t a_\mathrm{i}$) is related to the variation of
    $\alpha$ (resp. $a_\mathrm{i}$).
    According to \cite{Kokh16}, it holds, for any functions $f$ and $g$,
    it holds
    \begin{equation}
    \label{eq:prop8}
    \begin{aligned}
      \int_\Omega g \,\delta (\mathrm D_t f) \,\mathrm d\mathbf{x}
      \mathrm dt= &-\int_\Omega \bigl(\p_t g + \text{div}_{\mathbf{x}} 
      (\mathbf{u}g)\bigr) \delta f \,\mathrm d\mathbf{x} \mathrm dt \\
      &-
      \int_\Omega \left[(\p_t g+ \text{div}_{\mathbf{x}} (\mathbf{u}g)) \nabla_{\mathbf{x}}  f + g
        \nabla_{\mathbf{x}}  (D_t f)\right] \cdot \delta \mathbf{x}\,\mathrm
      d\mathbf{x} \mathrm dt.
    \end{aligned}
  \end{equation}
  Thus using \eqref{eq:prop8} with $g=\dfrac{\p L}{\p
      (\mathrm D_t\alpha)}=:M$ and $f=\mathrm D_t\alpha$ gives
    \begin{equation}
      \label{eq:ADtalpha}
      \begin{aligned}
        &\int_\Omega \dfrac{\p L}{\p (\mathrm D_t \alpha)} \delta
        (\mathrm D_t \alpha) \,\mathrm d\mathbf{x} \mathrm dt \\
        &= -
        \int_\Omega (\p_t M+ \text{div}_{\mathbf{x}} (M\mathbf{u}))\delta
        \alpha \, \mathrm d\mathbf{x} \mathrm dt \\
        &- \int_\Omega ((\p_t M+
        \text{div}_{\mathbf{x}} (M\mathbf{u}))\nabla_{\mathbf{x}}  \alpha + M \nabla_{\mathbf{x}}  (\mathrm D_t
        \alpha)) \cdot \delta \mathbf{x}\, \mathrm d\mathbf{x} \mathrm dt.
      \end{aligned}
    \end{equation}
    Analogously it holds with $g=\dfrac{\p L}{\p (\mathrm D_ta_\mathrm{i})}=:P$ and $f=\mathrm D_ta_\mathrm{i}$
    \begin{equation}
      \label{eq:ADtai}
      \begin{aligned}
        &\int_\Omega \dfrac{\p L}{\p (\mathrm D_t a_\mathrm{i})} \delta
        (\mathrm D_t a_\mathrm{i}) \,\mathrm d\mathbf{x} \mathrm dt \\
        &= -
         \int_\Omega (\p_t P+ \text{div}_{\mathbf{x}} (P\mathbf{u}))\delta a_\mathrm{i} \,\mathrm d\mathbf{x} \mathrm dt\\
         &- \int_\Omega ((\p_t P+
         \text{div}_{\mathbf{x}} (P\mathbf{u}))\nabla_{\mathbf{x}}  a_\mathrm{i} + P \nabla_{\mathbf{x}}  (\mathrm D_t
         a_\mathrm{i}))\cdot \delta \mathbf{x} \, \mathrm{d}\mathbf{x} \mathrm{d}t.
      \end{aligned}
    \end{equation}
\end{itemize}

Finally gathering \eqref{eq:Arho2}, \eqref{eq:eqAu2}, \eqref{eq:ADtalpha} and \eqref{eq:ADtai}
gives
\begin{equation*}
  \int_{\Omega(0)} [A_\alpha\delta \alpha+ A_{a_\mathrm{i}}\delta
  a_\mathrm{i}+ A_\mathbf{u}\delta \mathbf{x} ]\,\mathrm{d}\mathbf{x}\mathrm{d}t=0.
\end{equation*}
where
\begin{equation}
  \label{eq:deltaA1}
  \begin{cases}
    A_\alpha = \p_t M + \text{div} (M\mathbf{u}) -\dfrac{\p L}{\p
      \alpha}, & \text{with } M = \dfrac{\p L}{\p
      (\mathrm D_t\alpha)},\\
   A _{a_\mathrm{i}}= \p_t P + \text{div} (P\mathbf{u}) -\dfrac{\p L}{\p
      a_\mathrm{i}}, & \text{with } P= \dfrac{\p L}{\p
      (\mathrm D_ta_\mathrm{i})},\\
    A_\mathbf{u}= \p_t  K + \text{div} (K\mathbf{u}) -\nabla L^{*,\rho}, &
    \text{with } K= \dfrac{\p L}{\p \mathbf{u}}.
  \end{cases}
\end{equation}
Note that to express the term $C$, one makes use of the terms $A$ and
$B$.

Since one assumes the infinitesimal displacement and the variations of
volume fraction and interfacial area density to be independent,
the SAP applied to the Lagrangian
energy $L$ yields the equations of motion given by
\begin{equation*}
  A_\alpha=0, \quad A _{a_\mathrm{i}}=0, \quad A_\mathbf{u}=0.
\end{equation*}

\section{Final system and properties}
\label{sec:analyse}

As a result of the Stationary Action Principle, one obtains the following
set of equations describing the time evolution of the fluid-interface
system governed by the Lagrangian $L$.
It reads
\begin{equation}
  \label{eq:final_set}
  \begin{cases}
    \p_t M + \text{div} (M\mathbf{u}) -\dfrac{\p L}{\p
      \alpha} =0,\\
    \p_t P + \text{div} (P\mathbf{u}) -\dfrac{\p L}{\p
      a_\mathrm{i}}=0,\\
    \p_t  K + \text{div} (K\mathbf{u}) -\nabla L^{*,\rho}=0,
  \end{cases} 
\end{equation}
where $L^{*,\rho} $ the partial Legendre transform of $L$ defined in
\eqref{eq:L*}, and it is
completed by the mass conservation laws \eqref{eq:mass_conserv} and
the entropies
evolution equations \eqref{eq:entrop_cons}.

Actually the SAP ensures conservation principle (as a consequence of
the Noether's theorem, see \cite{BGP23}).
Let $\mathcal E$ be the partial Legendre transform of the Lagrangian $L$ with
respect to the \textit{kinetic} variables $\mathbf{u}$, $\mathrm D_t\alpha$
and $\mathrm D_t a_\mathrm{i}$. It reads
\begin{equation}
  \label{eq:entrop}
 \mathcal E(\rho,K,M,P,\alpha,a_\mathrm{i}, s, s_1,s_2,y)
  = \mathbf{u} K + \mathrm D_t \alpha M + \mathrm D_t a_\mathrm{i} P -L(\mathbf{B})
\end{equation}
or analogously
\begin{equation}
  \label{eq:entrop2}
  \mathcal E(\mathbf{B})
  = L_{kin}(\mathbf{B}) + L_{pot}(\mathbf{B}),
\end{equation}
with notations \eqref{eq:L}.
If the latter formula is more classic, the definition
\eqref{eq:entrop} has the advantage of simplifying the following
computations.

\begin{Prop}[Hyperbolicity]
  \label{prop:noether}
  The energy $\mathcal E$, defined by \eqref{eq:entrop}, satisfies
  the additional scalar conservation equation
  \begin{equation}
    \label{eq:dtE}
    \p_t \mathcal E+ \text{div}((\mathcal E- L^{*,\rho}) \mathbf{u})=0.
  \end{equation}
  If the energy $\mathcal E(\rho,K,M,P,\alpha,a_\mathrm{i}, s, s_1,s_2,y)$ is
  convex, then the system
  \eqref{eq:mass_conserv}-\eqref{eq:entrop_cons}-\eqref{eq:final_set} is
  hyperbolic and it is symmetrizable.
\end{Prop}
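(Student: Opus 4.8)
The plan is to establish the conservation equation \eqref{eq:dtE} first, then deduce hyperbolicity and symmetrizability from convexity of $\mathcal E$ via the standard Godunov--Mock theory. For the first part, I would compute $\p_t \mathcal E$ directly from the representation \eqref{eq:entrop}, $\mathcal E = \mathbf{u}K + \mathrm D_t\alpha\, M + \mathrm D_t a_\mathrm{i}\, P - L(\mathbf{B})$. Differentiating in $t$ and using the chain rule on $L$ as a function of all components $b \in \mathbf{B}$, the terms involving $\p L/\p \mathbf u$, $\p L/\p(\mathrm D_t\alpha)$, $\p L/\p(\mathrm D_t a_\mathrm{i})$ cancel against the corresponding pieces coming from $\p_t(\mathbf u K)$ etc.\ by the definition $K = \p L/\p\mathbf u$, $M = \p L/\p(\mathrm D_t\alpha)$, $P = \p L/\p(\mathrm D_t a_\mathrm{i})$. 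What remains involves the momentum equation $\p_t K + \div(K\mathbf u) = \nabla L^{*,\rho}$, the equations $\p_t M + \div(M\mathbf u) = \p L/\p\alpha$ and $\p_t P + \div(P\mathbf u) = \p L/\p a_\mathrm{i}$ from \eqref{eq:final_set}, the mass conservation \eqref{eq:mass_conserv}, and the entropy transport \eqref{eq:entrop_cons}. Substituting these in and repeatedly using $\p_t \rho + \div(\rho\mathbf u) = 0$ and $\mathrm D_t s = \mathrm D_t s_k = \mathrm D_t y = 0$ to kill the transported variables, the algebra should collapse to $\p_t\mathcal E + \div((\mathcal E - L^{*,\rho})\mathbf u) = 0$; the role of $L^{*,\rho}$ is exactly to account for the work term $\nabla L^{*,\rho}\cdot\mathbf u$ produced by the momentum flux. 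I expect this to be the most delicate computation: one has to be careful that the flux emerging is $(\mathcal E - L^{*,\rho})\mathbf u$ and not something else, which requires correctly bookkeeping the Legendre-transform identity $L^{*,\rho} = \rho\,\p L/\p\rho - L$ together with the density contribution \eqref{eq:Arho2}.

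For the second part, I would recast the full system \eqref{eq:mass_conserv}--\eqref{eq:entrop_cons}--\eqref{eq:final_set} as a first-order quasilinear system in conservative form for the unknowns $(\rho, \rho\mathbf u\text{-type momentum}, M, P, \rho s, \rho s_1, \rho s_2, \rho y, \rho\alpha, \rho a_\mathrm{i})$ — or more precisely in the variables naturally conjugate to $\mathcal E$. The key structural observation is that $\mathcal E$, expressed as a function of the conserved variables $(\rho, K, M, P, \alpha, a_\mathrm{i}, s, s_1, s_2, y)$ (equivalently their densities), serves as a mathematical entropy: equation \eqref{eq:dtE} is precisely the companion conservation law. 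By the Godunov--Mock theorem (see e.g.\ the references to \cite{BGP23}), a system of conservation laws equipped with a strictly convex entropy admitting a companion conservation law is symmetrizable, and symmetrizable systems are hyperbolic. So once \eqref{eq:dtE} is in hand and convexity of $\mathcal E$ in the stated variables is assumed, symmetrization follows by introducing the entropy variables $\mathbf v = \nabla_{\mathbf w}\mathcal E$ (where $\mathbf w$ is the vector of conserved quantities), under which the temporal Jacobian becomes the Hessian $\nabla^2_{\mathbf w}\mathcal E \succ 0$ and the flux Jacobians become symmetric; hyperbolicity (real eigenvalues, full set of eigenvectors) is then immediate.

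The main obstacle is the first step: verifying \eqref{eq:dtE} is a lengthy but mechanical identity-chase, and the chief risk is a sign error or a mishandled product-rule term in reconciling the material-derivative variables $\mathrm D_t\alpha$, $\mathrm D_t a_\mathrm{i}$ with their governing equations. A secondary, more conceptual point worth a remark: the convexity of $\mathcal E$ in the conserved variables is \emph{assumed} rather than derived — it does not follow automatically from convexity of the phasic energies $E_k$ and the interfacial energy $E_\mathrm{i}$, because the kinetic part contributes $\tfrac12 m|\mathrm D_t\alpha|^2 + \tfrac12\nu|\mathrm D_t a_\mathrm{i}|^2$ and, after Legendre transform in $\mathbf u, \mathrm D_t\alpha, \mathrm D_t a_\mathrm{i}$, one must check the combined Hessian in $(\rho, K, M, P, \dots)$ is positive definite; I would flag that this holds under suitable conditions on the equations of state and on the constants $m, \nu > 0$, leaving a detailed sufficient condition to the EoS discussion, and state the hyperbolicity conclusion conditionally on that convexity, exactly as in the statement.
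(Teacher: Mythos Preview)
Your proposal is correct and follows essentially the same route as the paper: compute the evolution of $\mathcal E$ from its Legendre-transform representation, use the equations \eqref{eq:final_set} for $K,M,P$ together with the transport of $s,s_1,s_2,y$ and the mass conservation to collapse everything to \eqref{eq:dtE}, then invoke Godunov--Mock for symmetrizability under the convexity hypothesis. The only cosmetic difference is that the paper organizes the calculation around the material derivative $\mathrm D_t\mathcal E$ rather than $\p_t\mathcal E$, which streamlines the bookkeeping slightly; your cautionary remark about convexity of $\mathcal E$ not being automatic is also echoed in the paper immediately after the proof.
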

\begin{proof}
  Using that $\mathcal E$ is the partial Legendre transform of the Lagrangian
  $L$ with respect to the kinetic variables, it holds (dropping the
  dependency of $\mathcal E$ and $L$ for readability)
  \begin{equation*}
    \begin{aligned}
     \mathrm  D_t \mathcal E &=\mathrm  D_t \left( \sum_{b\in \{\mathbf{u}, \mathrm D_t \alpha, \mathrm D_t a_\mathrm{i}\}}
        b\dfrac{\p L}{\p b} - L\right)\\
      &= \sum_{b\in \{\mathbf{u}, \mathrm D_t \alpha,\mathrm  D_t a_\mathrm{i}\}} \left(\mathrm  D_t b \dfrac{\p
          L}{\p b}+ b \mathrm D_t \left( \dfrac{\p L}{\p b}\right) \right)-
     \mathrm  D_t L.
    \end{aligned}
  \end{equation*}
  Using the notations $K,M$ and $P$, given in \eqref{eq:deltaA1}, and the transport of the specific
  entropies \eqref{eq:entrop_cons} and of the mass fraction \eqref{eq:mass_conserv}, it holds
  \begin{equation*}
    \begin{aligned}
     \mathrm D_t \mathcal E &= \mathbf{u}\mathrm D_t K +\mathrm D_t \alpha \mathrm D_t M + \mathrm D_t a_\mathrm{i} \mathrm D_t P
      - \dfrac{\p L}{\p \rho}\mathrm D_t \rho - \dfrac{\p L}{\p \alpha}\mathrm D_t
      \alpha - \dfrac{\p L}{\p a_\mathrm{i}}\mathrm D_t a_\mathrm{i}.
    \end{aligned}
  \end{equation*}
  Then using the evolution equations \eqref{eq:final_set}, it yields
    \begin{equation*}
    \begin{aligned}
     \mathrm D_t \mathcal E &= \mathbf{u}\cdot \left( -K \text{div}_{\mathbf{x}}(\mathbf{u})+
        \nabla _{\mathbf{x}} L^{*,\rho}\right) + \mathrm D_t \alpha \left( -M
        \text{div}_{\mathbf{x}} (\mathbf{u}) + \dfrac{\p L}{\p \alpha}\right)\\
      &\quad+\mathrm D_t a_\mathrm{i} \left( -P \text{div}_{\mathbf{x}} (\mathbf{u}) +  \dfrac{\p L}{\p
          a_\mathrm{i}}\right)
      -  \dfrac{\p L}{\p \rho}\mathrm D_t \rho -  \dfrac{\p L}{\p \alpha}
     \mathrm D_t\alpha-  \dfrac{\p L}{\p a_\mathrm{i}}\mathrm D_t a_\mathrm{i}\\
      & = -\text{div}_{\mathbf{x}} (\mathbf{u}) \left( K \mathbf{u} + M \mathrm D_t \alpha +
          P \mathrm D_t a_\mathrm{i} - \rho \dfrac{\p L}{\p \rho}\right) +
      \mathbf{u}\cdot \nabla _{\mathbf{x}} \left( \rho \dfrac{\p L}{\p \rho}-L\right).
    \end{aligned}
  \end{equation*}
  Using the definition
  \eqref{eq:entrop} of $\mathcal E$, it gives
  \begin{equation*}
    \begin{aligned}
     \mathrm D_t \mathcal E &= -\text{div}_{\mathbf{x}} (\mathbf{u}) \left(\mathcal E+L-\rho \dfrac{\p L}{\p \rho}\right) +
      \mathbf{u}\cdot \nabla _{\mathbf{x}} L^{*,\rho}\\
      &=  -\text{div}_{\mathbf{x}} (\mathbf{u}) \left( \mathcal E-\nabla_{\mathbf{x}} L^{*,\rho}\right)
        +\mathbf{u}\cdot \nabla_{\mathbf{x}} L^{*,\rho},
    \end{aligned}
  \end{equation*}
  which coincides with \eqref{eq:dtE}. Now if $\mathcal E$ is supposed to be convex
with respect to the variables  $(\rho,K,M,P,\alpha,a_\mathrm{i}, s,
  s_1,s_2,y)$, then it is a Lax entropy of the system which can be
  symmetrized in the sense of Godunov-Mock.
\end{proof}
Note that the sufficient criterion is quite restrictive since the
potential energy $L_{pot}(\mathbf{B})=\rho e(\tilde{\mathbf{B}})$ is not necessarily
strictly convex.

\subsection{Extended final set}
\label{sec:final_set}
We explicit in this paragraph the three equations using the definition
\eqref{eq:L} of the
Lagrangian functional.

\subsubsection{Momentum equation}
\label{sec:momentum-equations}
By the definition \eqref{eq:L*}, the Legendre transform of $L$ with respect to
the density $L^{*,\rho}$ is
\begin{equation}
  \label{eq:L*2}
  L^{*,\rho}(\mathbf{B}) = - \left(\dfrac{m}{2} |\mathrm D_t \alpha|^2
  + \dfrac{\nu}{2} |\mathrm D_t a_\mathrm{i}|^2+ p\right),
\end{equation}
with
\begin{equation}
    \label{eq:pmixt}
  {p = \alpha p_1 + (1-\alpha)p_2
    -  a_\mathrm{i} \gamma_\mathrm{i}},
\end{equation}
is the fluid-interface pressure derived first in
\eqref{eq:mixt_potentials}. Here one uses
$p_1:=p_1\left(\dfrac{\alpha}{y\rho},s_1\right)$ and $p_2:=p_2\left(\dfrac{1-\alpha}{(1-y)\rho},s_2\right)$.
Then the equation on $K=\p L/\p \mathbf{u} =\rho \mathbf{u}$ gives the
momentum equation, namely
\begin{equation*}
  \p_t (\rho \mathbf{u}) + \text{div}_{\mathbf{x}} (\rho \mathbf{u}^\top
  \mathbf{u}) + \nabla _{\mathbf{x}}\left({p} + {\dfrac{m}{2} |D_t\alpha|^2 + \dfrac{\nu}{2} |D_ta_\mathrm{i}|^2}\right) = 0.
\end{equation*}

This equation is similar to the one obtained in
\cite{Drui_these} or \cite{Kokh19}, except that, in this latter
reference, the pressure term
accounts for $\nabla _{\mathbf{x}} \alpha$. When dropping the small scale terms
$\mathrm D_t \alpha$ and $D_t a_\mathrm{i}$, one recovers the momentum flux
derived in \cite{hillairet:hal-03621307} for bubbly flows using an homogenization approach.
The pressure term $p$ comes from the 
potential energy $L_{pot}$ which defines the pressure term in the momentum equation.

\subsubsection{Evolution equations on $\alpha$ and $a_\mathrm{i}$}
\label{sec:alpha-equations}

Since $M=m \mathrm D_t\alpha$ and $P=\nu  \mathrm D_ta_\mathrm{i}$, 
the equations on $M$ and $P$ involve second
order derivatives in time on $\alpha$ and $a_\mathrm{i}$
respectively. Using the definition \eqref{eq:L} of $L$, and relations
\eqref{eq:ei}-\eqref{eq:Ti}, direct computations give
\begin{equation}
  \label{eq:MP}
  \dfrac{\p L}{\p \alpha} = p_1-p_2, \qquad \dfrac{\p L}{\p a_\mathrm{i}} = \gamma_\mathrm{i},
\end{equation}
which lead to 
\begin{equation}
  \label{eq:2nd_order}
  \begin{cases}
    \p_t (\mathrm D_t \alpha) + \text{div}_{\mathbf{x}}  (\mathbf{u}\mathrm D_t
    \alpha) = \dfrac{p_1-p_2}{m},\\
    \p_t (\mathrm D_t a_\mathrm{i}) + \text{div}_{\mathbf{x}}  (\mathbf{u}\mathrm D_t
    a_\mathrm{i}) = \dfrac{\gamma_\mathrm{i}}{\nu}.
  \end{cases}
\end{equation}

Following \cite{Drui_these, Kokh19}, the idea is to
decompose these second order equations into a pair of two first order derivative
in time equations, while introducing
additional unknowns.

For the equation on $M=m \mathrm D_t\alpha$, we fix
\begin{equation}
  \label{eq:w}
  \mathrm D_t\alpha =
  \dfrac{\rho y {w}}{\sqrt{m}}, 
\end{equation}
where $w$ is a new unknown. Then it holds
\begin{equation}
  \label{eq:Dtalpha_w}
  \begin{cases}
    \p_t \alpha+ \mathbf{u}\cdot \nabla_{\mathbf{x}}  \alpha=\dfrac{\rho y {w}}{\sqrt{m}},\\
    \p_t {w} +\mathbf{u}\cdot \nabla_{\mathbf{x}}  w= \dfrac{1}{\sqrt{m}\rho
      y}(p_2-p_1).
  \end{cases}
\end{equation}
Doing so for the equation on   $P=\nu  \mathrm D_ta_\mathrm{i}$, we introduce the
unknown $n$ satisfying
\begin{equation}
  \label{eq:n}
 \mathrm D_ta_\mathrm{i} =\dfrac{\rho y {n}}{\sqrt{\nu}},
\end{equation}
and
it yields
\begin{equation}
  \label{eq:Dtai_n}
  \begin{cases}
    \p_t a_\mathrm{i} + \mathbf{u}\cdot \nabla_{\mathbf{x}}   a_\mathrm{i} = \dfrac{\rho y {n}}{\sqrt{\nu}},\\
    \p_t {n} + \mathbf{u}\cdot \nabla_{\mathbf{x}}  n= \dfrac{\gamma_\mathrm{i}}{\sqrt{\nu}\rho y}.
  \end{cases}
\end{equation}
According to \cite{Kokh19,cordesse20,dibattista:tel}, the equations on the quantities $w$ and $n$, defined in this way, refer
to small scale momentum equations. In that sense the equations on
$\alpha$ and $a_\mathrm{i}$ connect small and large scales.

\subsubsection{Energy equations}
\label{sec:energy-equations}

The transport equations of the specific entropies are not convenient,
especially for numerical computations. We replace them by energy
equations using the Gibbs relations given in Section \ref{sec:therm-modell}.

The total energy equation $\mathcal E=L_{kin}+L_{pot}$
has already been given in \eqref{eq:dtE},
see Proposition \ref{prop:noether}, and its developed form reads
\begin{equation}
  \label{eq:total_nrj}
  \p_t \mathcal E + \text{div}_{\mathbf{x}}  ((\mathcal E+p)\mathbf{u})=0,
\end{equation}
where $p$ refers to the fluid-interface pressure \eqref{eq:pmixt}.

For sake of completness, we provide the phasic (nonconservative)
internal energy equations which read, for $k=1,2$,
\begin{equation}
  \label{eq:dtek}
  \begin{aligned}
    \p_t \left(\alpha_k \rho_k\left( e_k +
        \dfrac{|\mathbf{u}|^2}{2}\right) \right) &+ \text{div}_{\mathbf{x}} \left(
      \left(\alpha_k \rho_k\left( e_k +
          \dfrac{|\mathbf{u}|^2}{2}\right) + \alpha_k p_k\right) \mathbf{u}\right)
    \\
    &=
    \alpha_k p_k \text{div}_{\mathbf{x}} \mathbf{u} -
    y_k\mathbf{u}\cdot\nabla_{\mathbf{x}}  \tilde{p} - p_k  \dfrac{\rho y {w}}{\sqrt{m}},
  \end{aligned}
\end{equation}
where $\tilde{p} = p + \dfrac{m}{2}|\mathrm D_t \alpha|^2 +
\dfrac{\nu}{2}|\mathrm D_t a_\mathrm{i}|^2$.

Now using the transport equations of the specific entropies \eqref{eq:entrop_cons} and the
mass conservation equations \eqref{eq:mass_conserv}, one deduces that the interfacial entropy complies with
\begin{equation}
  \label{eq:dtaisi}
  \p_t (a_\mathrm{i} s_\mathrm{i}) + \text{div}_{\mathbf{x}}  (a_\mathrm{i} s_\mathrm{i} \mathbf{u}) = 0.
\end{equation}
Then combining \eqref{eq:daeI} and \eqref{eq:ei} leads to
the following 
interfacial energy 
evolution equation
\begin{equation}
  \label{eq:dtei}
    \p_t (a_\mathrm{i}e_\mathrm{i}) + \text{div}_{\mathbf{x}}  (a_\mathrm{i} e_\mathrm{i} \mathbf{u})- a_\mathrm{i}\gamma_\mathrm{i} \nabla_{\mathbf{x}} 
  \cdot \mathbf{u} = \gamma_\mathrm{i}\dfrac{\rho y n}{\sqrt{\nu}}.
\end{equation}

\subsubsection{Summary}
\label{sec:summ}

Using the definitions \eqref{eq:w} and \eqref{eq:n} the final set of
equations reads
\begin{equation}
  \label{eq:finalsys}
  \begin{cases}
    \p_t \rho +\text{div}_{\mathbf{x}}  (\rho \mathbf{u}) = 0,\\
    \p_t (\rho y) +\text{div}_{\mathbf{x}}  (\rho y\mathbf{u}) = 0,\\
    \p_t (\rho \mathbf{u}) + \text{div}_{\mathbf{x}}  \left(\rho \mathbf{u}^\top
      \mathbf{u}+\left(p+ \dfrac{m}{2} (\rho y w)^2 +
        \dfrac{\nu}{2} (\rho y n)^2
      \right)\mathbf{Id} \right)= 0,\\
    \p_t \alpha + \mathbf{u} \cdot \nabla_{\mathbf{x}}  \alpha= \dfrac{\rho y
      {w}}{\sqrt{m}},\\
    \p_t a_\mathrm{i} + \mathbf{u} \cdot \nabla_{\mathbf{x}}   a_\mathrm{i} = \dfrac{\rho y
      {n}}{\sqrt{\nu}},\\
    \p_t {w} +\mathbf{u} \cdot\nabla_{\mathbf{x}}  w=
    \dfrac{1}{\sqrt{m}\rho y} (p_1-p_2) ,\\
    \p_t {n} + \mathbf{u} \cdot \nabla_{\mathbf{x}}  n=
    \dfrac{\gamma_\mathrm{i}}{\sqrt{\nu}\rho y},\\
    \mathrm D_t s =  \mathrm D_t s_1 = \mathrm D_t s_2 = 0.
  \end{cases}
\end{equation}

\subsection{Hyperbolicity}
\label{sec:hyperbolicity}
To finish we investigate the eigenstructure of the system \eqref{eq:finalsys},
focusing on its one-dimensional version (with velocity $u$).

For that purpose, let consider the vector $\hat{\mathbf{B}} = (y,
\alpha, a_\mathrm{i}, w, n, s, s_1,s_2) ^\top\in \mathbb R^{8}$ and write the
system \eqref{eq:finalsys} in the following quasilinear form
\begin{equation}
  \label{eq:quasi}
  \p_t
  \begin{pmatrix}
    \rho\\ u\\\hat{\mathbf{B}} 
  \end{pmatrix}
  + \mathbf{C}(\rho, u, \hat{\mathbf{B}} )
    \p_x
  \begin{pmatrix}
    \rho\\ u\\ \hat{\mathbf{B}} 
  \end{pmatrix}
=\mathbf{R},
\end{equation}
where $\mathbf{R}=(0,0,0, \dfrac{\rho y
  {w}}{\sqrt{m}}, \dfrac{\rho y
  {n}}{\sqrt{\nu}},  \dfrac{1}{\sqrt{m}\rho y} (p_1-p_2) ,
\dfrac{\gamma_\mathrm{i}}{\sqrt{\nu}\rho y}, 0,0,0 )^\top\in \mathbb R^{10}$,
and the matrix $\mathbf{C}$ is given by
\begin{equation}
  \label{eq:C}
  \mathbf{C}(\rho, u, \hat{\mathbf{B}} ) =
  \begin{pmatrix}
    u & \rho & \mathbf{0}_{1\times 8}\\
    \dfrac{\hat{p}}{\p \rho} & u &\dfrac{1}{\rho}
    \nabla_{\hat{\mathbf{B}}} \hat p\\
    \mathbf{0}_{8\times 1}& \mathbf{0}_{8\times 1}& u \mathbf{I}_{8 \times 8}
  \end{pmatrix},
\end{equation}
with
\begin{equation*}
  \begin{aligned}
    \hat p(\hat{\mathbf{B}} ) &= 
 \alpha p_1\left( \dfrac{\alpha}{y \rho}, s_1\right) +
    (1-\alpha) p_2\left( \dfrac{1-\alpha}{(1-y) \rho}, s_2\right)\\
    &\qquad+ \dfrac{m}{2} (\rho y w)^2 +
    \dfrac{\nu}{2} (\rho y n)^2.
  \end{aligned}
\end{equation*}
The eigenvalues of $\mathbf{C}$ are 
\begin{equation}
  \label{eq:vp}
  \begin{aligned}
    \lambda_{1,2}= u \pm \rho\sqrt{yc_1^2 + (1-y) c_2^2 +  m (yw)^2+
      \nu (yn)^2},\quad \lambda_{3,\ldots,10} = u,
  \end{aligned}
\end{equation}
where $c_k^2 = \dfrac{\p p_k}{\p \rho_k}(\rho_k,s_k)$ is the speed of
sound of the phase $k=1,2$.
All the eigenvalues are real and the right eigenvectors of
$\mathbf{C}$ constitute a basis of $\mathbb R^{10}$.
This proves again the hyperbolicity of the system.

\medskip

As mention in Remark \ref{rem:dissip}, the SAP depicts reversible
processes only. Hence relaxation has to be set \textit{a posteriori}
according to the second principle.
For instance one may use source terms presented in
\cite{GavrilyukSaurel02} or \cite{dibattista:tel}
to enforce damping due to bubble pulsation. One could also make use of
the paragraph \ref{sec:therm-equil} to design dissipative phase
transition source terms in agreement with the thermodynamical
equilibrium given in Proposition \ref{prop:equi}.

This work has received the financial support from the CNRS grant \textit{D\'efi
  Math\'ema\-tiques France 2030}.
The authors would like to thank S. Kokh and N. Seguin for the fruitful discussions.

\bibliographystyle{plainurl}
\bibliography{BM}

\end{document}